\newtheorem{theorem}{Theorem}[section]
\newtheorem{proposition}[theorem]{Proposition}
\newtheorem{Proposition}[theorem]{Proposition}
\newtheorem{Lemma}[theorem]{Lemma}
\theoremstyle{definition}
\newtheorem{Definition}[theorem]{Definition}
\newtheorem{example}[theorem]{Example}
\theoremstyle{remark}
\newtheorem{remark}[theorem]{Remark}
\newtheorem{assumption}[theorem]{Assumption}
\newcommand{\cO}{{\mathcal O}}
\newcommand{\N}{\mathbb N}
\newcommand{\Q}{\mathbb Q}
\newcommand{\Z}{\mathbb Z}
\newcommand{\C}{\mathbb C}
\newcommand{\R}{\mathbb R}
\newcommand{\proj}{\mathbb P}
\newcommand{\bb}[1]{\mathbb{#1}}
\newcommand{\m}[1]{\mathcal{#1}}
\newcommand{\f}{\varphi}
\newcommand{\ra}{\rightarrow}
\newcommand{\lra}{\longrightarrow}
\newcommand{\lin}{\sim}
\DeclareMathOperator{\Spec}{Spec}
\DeclareMathOperator{\Proj}{Proj}
\begin{document}


\title[Lifting WBU]{Lifting Weighted Blow-ups} 
\author[Andreatta]{Marco Andreatta}

\thanks{
I like to thank Alessio Corti for suggesting to use Schlessinger theorem in the proof of Proposition \ref{cyclic} and for providing Example \ref{3cyclic}. I thank Roberto Pignatelli and Luca Tasin for very helpful conversations. I was supported by the MIUR grant PRIN-2010.}

\address{Dipartimento di Matematica, Universit\`a di Trento, I-38123
Povo (TN)} 
\email{marco.andreatta@unitn.it}

\subjclass{14E30, 14J40, 14N30}
\keywords{Contractions, Weighted blow-up, $\Q$-factorial terminal singularities}

\begin{abstract} 
Let $f: X \ra Z$ be a local, projective, divisorial contraction between normal varieties of dimension $n$ with $\Q$-factorial singularities.

Let $Y \subset X$ be a $f$-ample Cartier divisor and assume that $f_{|Y}:  Y \ra W$ has a structure of a weighted blow-up. We prove that $f: X \ra Z$, as well, has a structure of weighted blow-up.

As an application we consider a local projective contraction $f: X \ra Z$ from a variety $X$ with terminal $\Q$-factorial singularities, which contracts a prime divisor $E$ to an isolated $\Q$-factorial singularity $P\in Z$, such that  $-(K_X + (n-3)L)$ is $f$-ample, for a $f$-ample Cartier divisor $L$ on $X$. 
We prove that $(Z,P)$ is a hyperquotient singularity and $f$ is a weighted blow-up. 
\end{abstract}

\maketitle

\section{Introduction}

Let $X$ be a normal variety over $\C$ and $n = \dim X$. A  \emph{contraction} is a surjective morphism  $\f: X \ra Z$ with connected fibres onto a normal variety $S$.  
If $Z$ is affine then $f:X\ra Z$ will be called a \emph{local contraction}.

We always assume that $f$ is \emph{projective}, that is we assume the existence of $f$-ample Cartier divisors $L$. 

If $f$ is birational and its exceptional set is an irreducible divisor then it is called \emph{divisorial}. 
We say that the contraction is \emph{$\Q$-factorial} if $X$ and $Z$ have $\Q$-factorial singularities. Note that if $X$ is $\Q$-factorial and $f$ is a divisorial contraction of an extremal ray (in the sense of Mori Theory) then $Z$ is also $\Q$-factorial (see Corollary 3.18 in \cite{KollarMori}).

\medskip
A fundamental example of local contraction in Algebraic Geometry is the blow-up of  $\C^n = Spec\ \C[x_1, ...,x_n]$ at $0$. More generally, given $\sigma=(a_1,\ldots,a_n) \in \mathbb N^n$ such that $a_i >0$ and $m \in \N$, one can define the \textit{$\sigma$-blow-up} (or the weighted blow-up with weight $\sigma$) of a  hyperquotient singularity $Z : ((g=0)\subset  \C^n )/ \Z_m(a_1, ..., a_n)$. The definition is given in Section \ref{s_weighted}, in accordance with Section 10 in \cite{KollarMori92}.

\medskip
The main goal of the paper is to prove the following Theorem.
\begin{theorem}
\label{lifting}
Let $f: X \ra Z$ be a local, projective, divisorial and $\Q$-factorial contraction, which contracts an irreducible divisor $E$ to an isolated $\Q$-factorial singularity $P\in Z$. Assume that $dim X \geq 4$.

Let $Y \subset X$ be a $f$-ample Cartier divisor such that $f' = f_{|Y}: Y \ra f(Y) = W$  is a $\sigma'=  (a_1, \ldots,a_{n-1})$-blow-up, $\pi_{\sigma'}:  Y \ra W$.

Then $f: X \ra Z$ is a $\sigma=  (a_1, \ldots,a_{n-1}, a_n)$-blow-up, 
$\pi_{\sigma}:  X \ra Z$, where $a_n$ is such that $Y \sim_f -a_n E$ ($\sim_f$ means linearly equivalent over $f$).

\end{theorem}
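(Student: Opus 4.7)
The plan is to lift the hyperquotient / cyclic-quotient structure from $(W,f(P))$ to $(Z,P)$ by a deformation-theoretic argument, and then to check that the resulting weighted blow-up coincides with $f$. The ampleness of the Cartier divisor $Y$ is the main tool for transferring local data from $Y$ to $X$, and Schlessinger's theorem on versal deformations, as foreshadowed in the acknowledgments, is the key analytic input for lifting the defining equation.

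\emph{Step 1 (the exceptional divisor $E$).} I would first identify $E$. Since $\pi_{\sigma'} : Y \ra W$ is a $\sigma'$-blow-up, $E' := E|_Y$ is (a weighted hypersurface in) $\bP(a_1,\ldots,a_{n-1})/\bZ_m$. Because $Y$ is $f$-ample and Cartier on $X$, a Grothendieck--Lefschetz type restriction/extension argument combined with Kawamata--Viehweg vanishing applied on $E$ should identify $E$ itself as (a weighted hypersurface in) the quotient $\bP(a_1,\ldots,a_{n-1},a_n)/\bZ_m$, with the extra weight $a_n$ pinned down by the relation $Y|_Y \sim -a_n E|_Y$ coming from $Y \sim_f -a_n E$. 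At the same time this shows that $W = f(Y)$ is, locally around $P$, an effective Cartier divisor in $Z$.

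\emph{Step 2 (hyperquotient structure on $(Z,P)$).} Writing $W = ((g'=0)\subset \C^{n-1})/\bZ_m(a_1,\ldots,a_{n-1})$, the goal is to realize $(Z,P)$ as $((g=0)\subset \C^n)/\bZ_m(a_1,\ldots,a_n)$, with a new coordinate $x_n$ of weight $a_n$ and $g|_{x_n=0}=g'$. Via the Cartier embedding $W \hookrightarrow Z$ produced in Step 1, one may regard $Z$ as a one-parameter embedded deformation of $W$, so the task is to extend both the cyclic $\bZ_m$-action and the weighted-homogeneous equation $g'$ to $\C^n$ in a $\bZ_m$-equivariant way. This is precisely a Schlessinger-type lifting problem: the cyclic-quotient part is handled by the proposition referenced in the acknowledgment, and the hypersurface equation is then lifted inside the equivariant deformation functor, the weight of the new variable being forced to be $a_n$ by the numerical data $Y \sim_f -a_n E$.

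The main obstacle is exactly the equivariant lifting in Step 2: general (embedded) deformations of $W$ need not be of hyperquotient form, and one must exploit both the ambient contraction $f$ and the precise linear equivalence $Y \sim_f -a_n E$ to rigidify the deformation so that $x_n$ carries the correct weight $a_n$. Once this is accomplished, the identification $f = \pi_\sigma$ is essentially formal: $f$ and $\pi_\sigma$ are both local, projective, divisorial, $\bQ$-factorial contractions from $X$ with the same exceptional divisor $E$, and they restrict to the same morphism $\pi_{\sigma'}$ on the $f$-ample Cartier divisor $Y$; an ampleness-plus-uniqueness argument for divisorial extremal contractions then forces them to coincide.
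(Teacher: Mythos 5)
Your Step 2 (lifting the hyperquotient structure of $W$ to $(Z,P)$ via Schlessinger rigidity) does match one genuine ingredient of the paper, namely Proposition \ref{hyperquotient}. But the final step of your proposal, where you declare the identification $f=\pi_\sigma$ to be ``essentially formal'', is where the actual content of the theorem lies, and your argument for it is circular. You treat $\pi_\sigma$ as a contraction \emph{from $X$} with exceptional divisor $E$; but a priori the $\sigma$-blow-up of $Z$ is a morphism $\pi_\sigma:\overline X\to Z$ from some other variety $\overline X$, and proving $X\cong\overline X$ over $Z$ is precisely what must be shown. There is no ``uniqueness of divisorial extremal contractions'' that applies here: uniqueness statements fix the source $X$ and the contracted ray, whereas here the target $Z$ is fixed and many non-isomorphic divisorial contractions (e.g.\ different weighted blow-ups of the same singular point) share it, so knowing that $Z$ is the right hyperquotient and that $f|_Y=\pi_{\sigma'}$ does not formally pin down $X$. (Moreover $f$ is not even assumed extremal in the theorem.) The paper closes this gap with a concrete computation: it writes $X=\Proj_Z\bigl(\cO_Z\oplus\bigoplus_{d>0}f_*\cO_X(dL)\bigr)$ with $L=-ME$, and then proves, following Mori's Theorem 3.6 in \cite{Mo75}, that $f_*\cO_X(dL)$ equals the weighted ideal $I^{\sigma}(dM)\cdot\cO_Z$. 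This uses the vanishing $R^1f'_*\cO_Y(iL)=0$ on $Y$, the restriction sequence $0\to\cO_X((i-a)L)\to\cO_X(iL)\to\cO_Y(iL)\to0$ to deduce $R^1f_*\cO_X(iL)=0$ for all $i$, and then an induction on $d$ with a monomial-by-monomial weight estimate (the cases $s_n\ge1$, handled by multiplication by $x_n$, and $s_n=0$, handled by the hypothesis on $Y$). None of this appears in your proposal, and without it the proof is incomplete; this is also where the precise value $a_n=aM$ (via Lemma \ref{push-forward}) enters, rather than being ``forced'' abstractly by $Y\sim_f -a_nE$.

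A secondary inaccuracy: in Step 1 you assert that $W=f(Y)$ is a Cartier divisor in $Z$ near $P$. Since $Z$ is only $\Q$-factorial and $f$ is a birational contraction, the image of the Cartier divisor $Y$ need only be $\Q$-Cartier; the paper's Propositions \ref{cyclic} and \ref{hyperquotient} are stated for Weil divisors exactly for this reason and pass to an index-one cyclic cover to reduce to the Cartier case. Your proposed identification of $E$ with a weighted hypersurface in $\bP(a_1,\ldots,a_n)$ via Kawamata--Viehweg vanishing is also not how the paper proceeds and is not needed; what the paper extracts from Lefschetz is only the equality $\Pic(X/Z)=\Pic(Y/W)=\langle L\rangle$, which feeds into the $\Proj$ computation above.
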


\bigskip
We apply the above Theorem to the study of birational contractions which appear in a  Minimal Model Program (MMP) with scaling on polarized pairs. 

\smallskip
More precisely, if $X$ is a variety with terminal $\Q$-factorial singularities and  $L$ is an ample Cartier 
divisor on $X$, the pair $(X,L)$ is called a {\sl Polarized Pair}.
Given a non negative rational number $r$,  there exists an effective $\Q$-divisor $\Delta^r$ on $X$ such that
$\Delta^r \sim_{\Q} r L$ and $(X, \Delta^r)$ is  Kawamata log terminal.
Consider the pair $(X, \Delta^r)$ and the $\Q$-Cartier divisor $K_X + \Delta^r  \sim_{\Q} K_X + r L$.

By Theorem 1.2 and Corollary 1.3.3 of \cite{BCHM}  we can run a {\sl $K_X + \Delta^r$-Minimal Model Program (MMP) with scaling}.
This type of MMP was studied in deeper details in the case $r \geq (n-2)$ in \cite{And13}.

\smallskip
To perform such a program one needs to understand local birational maps (divisorial or small contractions), $f: X \ra Z$, which are contractions of an extremal rays $R := \R^+ [C] \subset N_1(X/Z)$, where $C$ is a rational curve such that $(K_X + rL)^. C< 0$ for a $f$-ample Cartier divisor $L$. 
We will call these maps Fano-Mori contractions or {\sl contractions for a MMP}.

In \cite{AnTa} we classify local birational contractions for a MMP if $r \geq (n-2)$: they are $\sigma$-blow-up of a smooth point with $\sigma = (1,1,b,..., b)$, where $b$ is a positive integer.

In \cite{AnTa15}, Theorem 1.1,  we prove that if $r > (n-3) >0$ then one can find a general divisor $X' \in |L|$ which is a variety with at most $\Q$-factorial terminal singularities and such that $f_{|X'} : X' \to f(X')=:Z'$ is a contraction of an extremal ray $R' := \R^+ [C']$ such that $(K_{X'}+ (r - 1) L')^. C'< 0$, where $L' := L_{|X'}$.

On the other hand a very hard program, aimed to classify local divisorial contractions to a point for a MMP in dimension $3$, has been started long ago by Y. Kawamata (\cite{Ka}); it was further carried on by M. Kawakita, T. Hayakawa and J. A. Chen (see, among other papers, \cite{Kaw01}, \cite{Kaw02}, \cite{Kaw03}, \cite{Kaw05}, \cite{Kaw12}, \cite{Hayakawa99}, \cite{Hayakawa00}, \cite{Hayakawa05}, \cite{Chen}). They are all weighted blow-ups of (particular) cyclic quotient or hyperquotient singularities and this should be the case for the few remaining ones.  It is reasonable to make the following:

\begin{assumption} \label{ass}
The divisorial contractions to a point for a MMP in dimension $3$ are weighted blow-up. 
\end{assumption}

The next result is a consequence, via a standard induction procedure, called {\it Apollonius method}, of Theorem \ref{lifting}, the above quoted Theorem 1.1 in \cite{AnTa15} and Assumption (\ref{ass}) in dimension $3$.

\begin{theorem} 
\label{MMP}
Let $X$ be a variety with $\Q$-factorial terminal singularities of dimension $n \geq 3$ and 
let $f : X \ra Z$ be a local, projective, divisorial contraction which contracts a prime divisor $E$ to an isolated $\Q$-factorial singularity $P\in Z$
such that  $-(K_X + (n-3)L)$ is $f$-ample, for a $f$-ample Cartier divisor $L$ on $X$. 

Then $P\in Z$ is a hyperquotient singularity.

Moreover, if we assume that  \ref{ass} holds, 
$f$ is a weighted blow-up.

\end{theorem}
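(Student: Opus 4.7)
The plan is to prove the result by induction on the dimension $n$, following the \emph{Apollonius method} alluded to just before the statement.

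For the base case $n=3$, the hypothesis becomes that $-K_X$ is $f$-ample, so $f$ is a $K_X$-negative divisorial extremal contraction on a $3$-fold with $\Q$-factorial terminal singularities. Standard Mori-theoretic results force $Z$ to inherit $\Q$-factorial terminal singularities, and by the Mori--Reid classification of $3$-fold terminal singularities $(Z,P)$ is a hyperquotient singularity. Under Assumption~\ref{ass}, $f$ is moreover a weighted blow-up.

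For the inductive step $n \geq 4$, I note first that since $(K_X + (n-3)L)\cdot C < 0$ is strict on the extremal curve $C$, the same inequality persists with $r = n-3+\varepsilon$ for all sufficiently small $\varepsilon > 0$, so $-(K_X + (n-3+\varepsilon)L)$ is still $f$-ample. Applying Theorem~1.1 of~\cite{AnTa15} with this $r$ produces a general $X' \in |L|$ with $\Q$-factorial terminal singularities such that $f' := f|_{X'} \colon X' \to Z' := f(X')$ is the divisorial contraction of an extremal ray $\R^+[C']$ with $(K_{X'} + (r-1)L')\cdot C' < 0$ for $L' = L|_{X'}$; in particular $-(K_{X'} + (n-4)L')$ is $f'$-ample. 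Generality of $X'$ together with Corollary~3.18 of~\cite{KollarMori} ensures that $f'$ contracts the prime divisor $E\cap X'$ to an isolated $\Q$-factorial point $P \in Z'$. The triple $(f', Z'\ni P)$ now satisfies the hypotheses of Theorem~\ref{MMP} in dimension $n-1$; by the inductive hypothesis $(Z',P)$ is a hyperquotient singularity and, under Assumption~\ref{ass}, $f'$ is a weighted blow-up $\pi_{\sigma'}\colon X' \to Z'$. In the latter case, Theorem~\ref{lifting} applied to the triple $(f,\, Y=X',\, W=Z')$ lifts the $\sigma'$-blow-up structure to a $\sigma$-blow-up structure on $f$ itself, which gives both conclusions.

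The main obstacle is proving $(Z,P)$ hyperquotient \emph{unconditionally}, i.e.\ independent of Assumption~\ref{ass}: Theorem~\ref{lifting} as stated requires a concrete weighted blow-up on $f'$, which is unavailable without the assumption. To bypass this I would invoke the auxiliary Proposition~\ref{cyclic} (proved via Schlessinger's deformation theory, as acknowledged in the introduction): this proposition should lift a hyperquotient structure from $W$ to $Z$ using only that $Y \subset X$ is an $f$-ample Cartier divisor, independently of any concrete weighted blow-up structure on $f|_Y$. Verifying the precise hypotheses of Proposition~\ref{cyclic} at each stage of the induction is the technically delicate point.
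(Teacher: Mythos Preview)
Your proposal is correct and follows essentially the same Apollonius-style induction as the paper: cut down by general members of $|L|$ using Theorem~1.1 of \cite{AnTa15}, invoke the $3$-fold classification at the bottom, then climb back up using Proposition~\ref{hyperquotient} for the unconditional hyperquotient claim and Theorem~\ref{lifting} (under Assumption~\ref{ass}) for the weighted blow-up claim. The paper organises the descent slightly differently---taking all $n-3$ hyperplane sections at once and tracking the tower $Y_t = \bigcap_{i=1}^{n-3-t} H_i$---but this is equivalent to your one-step recursion.

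Two small corrections. First, the proposition you need in the last paragraph is Proposition~\ref{hyperquotient}, not Proposition~\ref{cyclic}; the latter only lifts \emph{cyclic} quotient structures, whereas what the induction hands you at each stage is a hyperquotient singularity on $W_t$. Second, the hypothesis of that proposition is not that $Y \subset X$ is an $f$-ample Cartier divisor, but that $W = f(Y)$ sits as a Weil divisor inside the $\Q$-factorial isolated singularity $(Z,P)$; the paper verifies this by observing that each $W_t = \bigcap Z_i$ is an intersection of $\Q$-Cartier divisors on the terminal $\Q$-factorial $Z$, hence itself $\Q$-Cartier, and terminal by \cite[Corollary~3.43]{KollarMori}. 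Your remark that ``verifying the precise hypotheses \ldots\ is the technically delicate point'' is exactly right, and these are the checks the paper carries out.
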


\section{Weighted blow-ups}
\label{s_weighted}

We recall the definition of weighted blow-up, our notation is compatible with 
that of Section 10 in \cite{KollarMori92} and of Section 3 in \cite{Hayakawa99}. 

Let  $\sigma=(a_1,\ldots,a_n) \in \mathbb N^n$ such that $a_i >0$ and $\gcd(a_1,\ldots,a_n)=1$; let $M=\mathrm{lcm}(a_1,\ldots,a_n)$.

\medskip
{\sl The weighted projective space} with weight $(a_1,\ldots,a_n)$, denoted by  $ \bb P(a_1, \ldots,a_n)$,
can be defined either as:

$$\bb P(a_1, \ldots,a_n): = (\C^n - \{0\}) /\C^*,$$
where $\xi \in \C^*$ acts by $\xi (x_1, ..., x_n) = (\xi^{a_1}x_1, ..., \xi^{a_n}x_n)$.

Or as:
$$ \bb P(a_1, \ldots,a_n) := Proj_{\C}\C[x_1, ..., x_n],$$
where $\C[x_1, ..., x_n]$ is the polynomial algebra over $\C$ graded by the condition $deg(x_i) = a_i$, for $i =1,..., n$.

\medskip
A {\sl cyclic quotient singularity}, denoted by $\C^n / \Z_m(a_1, ..., a_n):= X$, is an affine variety definite as the quotient of $\C^n$ by the action $\epsilon: (x_1, ..., x_n) \ra (\epsilon^{a_1}x_1, ..., \epsilon^{a_n}x_n)$, where $\epsilon$ is a primitive $m$-th root of unity. Equivalently $X$ is isomorphic to the spectrum of the ring of invariant monomials under the group action, $Spec\ \C[x_1, ...,x_n]^{\Z_m}$.

\medskip
Let $Q \in Y: (g=0)  \subset \C^{n+1}$  be a hypersurface singularity with a $\Z^m$ action. The point $P \in Y/\Z^m := X$ is called a {\sl hyperquotient singularity}. In suitable local analytic coordinates the action on $Y$ extends to an action on $\C^{n+1}$ (in fact it acts on the tangent space $T_{Y,Q}$) and we can assume that $\Z_m$ acts diagonally by $\epsilon: (x_0, ..., x_n) \ra (\epsilon^{a_0}x_0, ..., \epsilon^{a_n}x_n)$, where  $\epsilon$ is a primitive $m$-th root of unity.
Since $Y$ is fixed by the action of $\Z_m$, it follows that $g$ is an eigenfunction, so that  $\epsilon: g \ra \epsilon ^eg$. We define the {\sl type} of the hyperquotient singularity $P \in X$ with the symbol ${1\over m} (a_0,..., a_n;e)$. Note that if $m=1$ this is simply a hypersurface singularity, while if $g = x_0$ this is a cyclic quotient singularity.

\bigskip
Let  $X = \C^n / \Z_m(a_1, ..., a_n)$ be a cyclic quotient singularity and consider the rational map
$$
\f: X \to \bb P(a_1, \ldots,a_n)
$$
given by $(x_1,\ldots,x_n) \mapsto (x_1:\ldots:x_n)$.

\begin{Definition}
\label{weighted}
The \textit{weighted blow-up} of $X= \C^n / \Z_m(a_1, ..., a_n)$ with weight $\sigma = (a_1, ..., a_n)$ (or simply the $\sigma$-blow-up), $\overline X$,  is defined as the closure in 
$X \times \bb P(a_1, \ldots,a_k)$ of the graph of $\f$, together with the morphism $\pi_{\sigma}: \overline X \to X$ given by the projection on the first factor.
\end{Definition}

\medskip
The weighted blow-up can be described by the theory of torus embeddings, as in section 10 of \cite{KollarMori92}. 
Namely, let $e_i = (0, ...,1, ...,0)$ for $i = 1,..., n$ and $e= 1/m(a_1, ..., a_n)$. Then  $X$ is the toric variety which corresponds to the lattice 
$\Z e_1 + ... +\Z e_n + \Z e$ and the cone $C(X) = \Q_+ e_1 + ... +\Q_+ e_n$ in $\Q^n$, where $\Q_+ = \{ z \in \Q : z \geq 0\}$.

$\pi_{\sigma}: \overline X \to X$ is the proper birational morphism from the normal toric variety $\overline X$ corresponding to the cone decomposition of $C(X)$ consisting of $C_i = \Sigma_{j \not=   i} \Q _+ e_j + \Q _+ e$, for $i = 1,..., n$, and their intersections.

\bigskip
The following facts can be easily checked in many ways, for instance via toric geometry (see also section 10 in \cite{KollarMori92} or section 3 in \cite{Hayakawa99}). 

\begin{itemize}

\smallskip
\item The map $\pi_{\sigma}$ is birational and contracts an exceptional irreducible divisor $E \cong \mathbb P(a_1,\ldots,a_k)$ to $0 \in X$. 

\item
Let $(y_1: \ldots :y_n)$ be homogeneous coordinates on $\bb P(a_1, \ldots,a_n)$. For any $1\le i \le k$ consider the open affine subset $U_i=\overline X \cap \{y_i \ne 0  \}$; these affine open subset are described as follows:

$$U_i \cong  \Spec \C[\bar x_1, \ldots, \bar x_n]/\bb Z_{a_i}(-a_1,\ldots,m,\ldots,-a_n)$$

The morphism ${\f_{\sigma}}_{|U_i} :U_i \to X$ is given by

$$
(\bar x_1, \ldots, \bar x_n) \mapsto (\bar x_1 \bar x_i^{a_1/m},\ldots, \bar x_i^{a_i/m}, \ldots, \bar x_k \bar x_i^{a_k/m}).$$

\item
 In the affine set $U_i$ the divisor $E$ is defined by $\{\bar x_i=0  \}$; it is a $\Q$-Cartier divisor and $\cO_{\overline X}(-aE)\otimes \cO_E= \cO_{\proj} (ma)$, for $a$ divisible by $\Pi a_i$. \\
$H:= -ME$ is actually Cartier, it is generated over $\pi_{\sigma}$ by global sections and it is the generator of $Pic(\overline X /X)  = \Z = <H>$.

\item
Let $L = a H$, for $a$ a positive integer; clearly $L$ is $\sigma$-ample. We have 
$$ R^1{\pi_{\sigma}}  _*\cO_Y(iL) = H^1(\overline X, iL) =  0 $$ 
for every $i\in \Z$.

\end{itemize}

\bigskip
We now use Grothendieck's language to give a different characterization of the  $\sigma$-weighted blow-up.

For $a$ a positive integer let  $L = a H = -aM E$. $L$ is a $\pi_{\sigma}$-ample Cartier divisor. 

Consider the graduated $\C[x_1, ...,x_n]^{\Z_m}$-algebra  $\bigoplus_{d\ge 0} \pi_*\m O_X(dL)$. 
The construction in section (8.8) of \cite{EGA II}, gives
$$
\overline{X}= \Proj_X \big( \cO_X \oplus \bigoplus_{d >  0} \pi_*\m O_X(dL)\big) \to X.
$$

\smallskip
Consider now the function 
$$
\sigma\textrm{-wt}: \bb C[x_1,\ldots,x_n] \to \bb \Q
$$
defined as follows. On a monomial $M=x_1^{s_1}\ldots x_n^{s_n}$ we put $\sigma\textrm{-wt}(M):=\sum_{i=1}^n s_i a_i/m$.
For a general $f= \sum_I \alpha_I M_I$, where $\alpha_I \in \bb C$ and $M_I$ are monomials, we set 
$$
\sigma\textrm{-wt}(f):=\min \{\sigma\textrm{-wt}(M_I) : \alpha_I \ne 0 \}.
$$

\begin{Definition}
\label{weightedId}
For a rational number $k$ the $\sigma$-weighted ideal $I^{\sigma}(k)$ is defined as:
$$ 
I^{\sigma}(k) = \{g \in \C[x_1,\ldots,x_n] : \sigma\textrm{-wt}(g)\ge k  \}= (x_1^{s_1}\cdots x_n^{s_n} : \sum_{j=1}^n s_ja_j /m\ge k).
$$

$I^{\sigma}(k)$ is a an ideal in $\C[x_1,\ldots,x_n] $ and therefore also in $\C[x_1,\ldots,x_n] ^{\Z_m}$; in particular $\C[x_1, ...,x_n]^{\Z_m} \oplus \bigoplus_{k\in \N, d > 0} I^{\sigma}(k)$
is a  $\C[x_1, ...,x_n]^{\Z_m}$-graded module.
\end{Definition}

The next Lemma follows straightforward from the above discussion; see also Lemma 3.5 in \cite{Hayakawa99}.

\begin{Lemma}
 \label{push-forward}
Let $\pi_{\sigma}: \overline X \to X$ be a $\sigma$-blow-up, $E$ the exceptional divisor; let $D$ be the $\Q$-Cartier Weil divisor defined by a $\Z_m$-semi invariant $f \in \C[x_1, ..., x_n]$. Then we have 
$$\pi_{\sigma}^*(D) = \overline D + (\sigma\textrm{-wt}(f)) E,$$

where $\overline D$ is the proper transform of $D$.

In particular, for every integer $a$, we have  $\pi_*\m O_{\overline X}(-aE) = I^{\sigma}(a)$.
\end{Lemma}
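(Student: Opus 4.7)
The plan is to reduce the statement to a chart-wise computation on the toric affine opens $U_i$ of $\overline X$ described in the bulleted list. On $U_i$, with coordinates $(\bar x_1,\ldots,\bar x_n)$, the map $\pi_\sigma$ is given by $x_j=\bar x_j\bar x_i^{a_j/m}$ for $j\ne i$ and $x_i=\bar x_i^{a_i/m}$, and $E\cap U_i=\{\bar x_i=0\}$. For a monomial $M=x_1^{s_1}\cdots x_n^{s_n}$ one computes directly
$$\pi_\sigma^*M=\Bigl(\prod_{j\ne i}\bar x_j^{s_j}\Bigr)\bar x_i^{\sum_j s_ja_j/m},$$
so that $\ord_E(\pi_\sigma^*M)=\sigma\textrm{-wt}(M)$.

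For a general $\Z_m$-semi-invariant $f=\sum_I\alpha_I M_I$ defining $D$, I would factor out the common power $\bar x_i^{\sigma\textrm{-wt}(f)}$ and write $\pi_\sigma^*f=\bar x_i^{\sigma\textrm{-wt}(f)}\,h_i$ on $U_i$. By the very definition of $\sigma\textrm{-wt}(f)$ as the minimum weight among the $M_I$ with $\alpha_I\ne 0$, at least one monomial contributes a term to $h_i$ not divisible by $\bar x_i$, so $h_i$ does not vanish identically along $E\cap U_i$. The zero divisor of $h_i$ on $U_i$ therefore coincides with $\overline D\cap U_i$, giving $\pi_\sigma^*D=\overline D+\sigma\textrm{-wt}(f)\,E$ on $U_i$; gluing over the charts proves the first equality.

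For the second assertion I would use that $\cO_{\overline X}(-aE)$ is the reflexive sheaf whose sections are rational functions $g$ with $\ord_E(g)\ge a$, together with the fact that $\pi_\sigma$ is an isomorphism off $E$. Then $\pi_{\sigma *}\cO_{\overline X}(-aE)$ consists of those $g\in\C[x_1,\ldots,x_n]^{\Z_m}$ whose pullback vanishes to order at least $a$ along $E$; by the first part this is the condition $\sigma\textrm{-wt}(g)\ge a$, that is, $g\in I^\sigma(a)$.

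The only real obstacle is keeping track of the fractional exponents $a_j/m$ and the residual $\Z_{a_i}$-quotient on each chart $U_i$, which is exactly what the enlarged lattice $\Z e_1+\cdots+\Z e_n+\Z e$ with $e=(a_1,\ldots,a_n)/m$ is designed to encode. Once this bookkeeping is in place, the identification of the pullback with a weight computation along a torus-invariant divisor is formal, and the lemma reduces to the explicit calculation above.
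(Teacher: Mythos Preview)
Your proposal is correct and is precisely the computation the paper has in mind: the paper offers no detailed proof, stating only that the lemma ``follows straightforward from the above discussion'' (i.e., the chart description of $U_i$ and ${\pi_\sigma}_{|U_i}$) and referring to Hayakawa, Lemma~3.5. Your chart-wise calculation of $\ord_E(\pi_\sigma^*f)$ via the substitution $x_j\mapsto \bar x_j\bar x_i^{a_j/m}$ is exactly that discussion made explicit.
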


\medskip
The Grothendieck set-up and the Lemma imply immediately the following characterization of weighted blow-up.

\begin{Proposition} 
\label{ideal}
Let $X= \C^n / \Z_m(a_1, ..., a_n)$ and $b$ a positive integer multiple of $M = \mathrm{lcm}(a_1,\ldots,a_n)$. 
The \textit{weighted blow-up} of $X$ with weight $\sigma$ defined above, $\pi_{\sigma}: \overline X \to X$,
is given by
$$
\overline{X}= \Proj_X \big( \cO_X \oplus \bigoplus_{d\in \N, d\ > 0} I^{\sigma}(db)\big).
$$
\end{Proposition}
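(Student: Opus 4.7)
The plan is to combine the Grothendieck description of $\overline X$ recalled just above the statement with the pushforward identity of Lemma~\ref{push-forward}. Everything needed has already been assembled in the bullet points and the Lemma, so the proof will essentially amount to making the right choice of divisor and comparing graded pieces.

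First, I would choose as my $\pi_{\sigma}$-ample Cartier divisor
$$L := -bE.$$
Since $b$ is a positive integer multiple of $M=\mathrm{lcm}(a_1,\ldots,a_n)$, the bullet points give that $H=-ME$ is Cartier and is the generator of $\Pic(\overline X / X)$; hence $L=(b/M)H$ is genuinely Cartier and $\pi_{\sigma}$-ample, and moreover $R^1{\pi_{\sigma}}_*\cO_{\overline X}(dL)=0$ for every $d\in\Z$. This is the divisor I want to feed into the Grothendieck construction.

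Next, I would invoke the formula from section (8.8) of \cite{EGA II}, recalled above:
$$
\overline X \;=\; \Proj_X\Big(\cO_X \oplus \bigoplus_{d>0} {\pi_{\sigma}}_*\cO_{\overline X}(dL)\Big).
$$
The vanishing of the higher direct images ensures this really does reconstruct $\overline X$ with no correction terms. It remains to identify the graded pieces. Since $dL = -dbE$, Lemma~\ref{push-forward} applied $d$ times (or rather, applied directly to the Weil divisor $dbE$) gives
$$
{\pi_{\sigma}}_*\cO_{\overline X}(dL) \;=\; {\pi_{\sigma}}_*\cO_{\overline X}(-db\,E) \;=\; I^{\sigma}(db).
$$
Substituting into the Grothendieck formula yields precisely the asserted description.

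I do not see a substantial obstacle here: every nontrivial input (the Cartier property of $-ME$, the Grothendieck Proj presentation, the vanishing of $R^1{\pi_\sigma}_*$, and the identification ${\pi_\sigma}_*\cO(-aE)=I^\sigma(a)$) is already in hand. The only point that must be handled with care is the Cartier hypothesis on $L$, which is exactly what forces the divisibility assumption $M\mid b$ in the statement; without it the algebra $\bigoplus_d I^\sigma(db)$ would not arise as pushforward of an invertible sheaf and the Grothendieck construction would require reformulation.
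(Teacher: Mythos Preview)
Your proposal is correct and is precisely the argument the paper has in mind: the paper simply asserts that ``the Grothendieck set-up and the Lemma imply immediately'' the Proposition, and you have written out exactly that implication---choose $L=-bE$ (Cartier because $M\mid b$), apply the Proj description $\overline X=\Proj_X\big(\cO_X\oplus\bigoplus_{d>0}{\pi_\sigma}_*\cO_{\overline X}(dL)\big)$, and identify each graded piece via Lemma~\ref{push-forward}. The only superfluous step is invoking the $R^1$-vanishing, which plays no role in the Grothendieck Proj formula (ampleness of $L$ suffices), but this does no harm.
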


\begin{remark} The above characterization of $\overline X$ does not depend on the the choice of $b$ as a positive multiple of $M$;
in fact taking $\Proj$ of {\sl truncated} graded algebras we obtain isomorphic objects (see for instance Exercise 5.13 or 7.11, Chapter II in \cite{Hartshorne}).

Note that it is not true that $I^{\sigma}(db) = I^{\sigma}(b)^d$: see for instance Example 3.5 in \cite{AnTa}.
However this is true if $b$ is chosen big enough; this can be proved, for instance, following the proof of Theorem 7.17 in \cite{Hartshorne}.

If this is the case we have that $\overline{X}= \Proj_X \big( \cO_X \oplus \bigoplus_{d\in \N, d\ > 0} I^{\sigma}(b)^d\big)$; that is $\overline X$ is the blowing-up of  $X= \C^n / \Z_m(a_1, ..., a_n)$ with respect to the coherent ideal $I^{\sigma}(b)$ (see the definition in Section 7, Chapter II, \cite{Hartshorne}).

\end{remark}

\medskip
\begin{Definition} Let $X :  ( (g=0)  \subset \C^{n+1})/ \Z_m(a_0, ...., a_n)$ be a hyperquotient singularity and let $\pi : \overline{\C^{n+1}/ \Z_m(a_0, ...., a_n)}  \ra \C^{n+1}/ \Z_m(a_0, ...., a_n)$ be the $\sigma = (a_0, ..., a_n)$-blow-up. Let $\overline X$ be the proper transform of $X$ via $\pi$ and call again, by abuse, $\pi$ its restriction to $\overline X$.  
Then $\pi: \overline X \ra X$ is also called the  {\sl  weighted blow-up of $X$ with weight $\sigma = (a_1, ..., a_n)$} (or simply the $\sigma$-blow-up).
\end{Definition} 

The above Proposition \ref{ideal}, together with Corollary 7.15,  Chapter II, \cite{Hartshorne}, implies the following.
\begin{Proposition} 
\label{ideal2}
Let $X :  ( (g=0)  \subset \C^{n+1})/ \Z_m(a_0, ...., a_n)$ be a hyperquotient singularity and let $i: X \ra \C^{n+1}/ \Z_m(a_0, ...., a_n)$ be the inclusion. 

Then 
$$\overline X =  \Proj_X \big( \cO_X \oplus\bigoplus_{d\in \N, d > 0} J^{\sigma}(db)\big) \ra X,$$
where $J^{\sigma}(db) := i^{-1}\big( I^{\sigma}(db) \big)^. \cO_X $.

If $b$ is big enough then 
$$\overline X =  \Proj_X \big( \cO_X \oplus\bigoplus_{d\in \N, d > 0} J^{\sigma}(b)^d\big) \ra X.$$

\end{Proposition}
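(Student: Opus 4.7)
The plan is to reduce the statement to Proposition \ref{ideal} applied to the ambient cyclic quotient singularity, together with the standard identification of a proper transform with a blow-up along a restricted ideal.

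Set $Y := \C^{n+1}/\Z_m(a_0,\ldots,a_n)$ and let $\pi_\sigma: \overline Y \to Y$ be the $\sigma$-blow-up of the ambient cyclic quotient, which by Proposition \ref{ideal} is
$$\overline Y = \Proj_Y\Bigl(\cO_Y \oplus \bigoplus_{d>0} I^\sigma(db)\Bigr).$$
By the definition stated just before the present proposition, $\overline X$ is the proper transform of $X \subset Y$ along $\pi_\sigma$, equipped with the restriction of $\pi_\sigma$. So the task is to express this proper transform as a $\Proj_X$ of the graded algebra built from the $J^\sigma(db)$.

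The key input is Corollary 7.15 of Chapter II in \cite{Hartshorne}: for a closed immersion $i: X \hookrightarrow Y$ and a coherent ideal $\mathcal{I} \subset \cO_Y$, the strict transform of $X$ inside $\mathrm{Bl}_{\mathcal{I}}(Y)$ is canonically isomorphic to $\mathrm{Bl}_{i^{-1}(\mathcal{I}) \cdot \cO_X}(X)$. Proposition \ref{ideal} presents $\overline Y$ as a $\Proj$ of a graded algebra rather than as a blow-up along a single ideal, so I would first pick $b$ large enough that $I^\sigma(db) = I^\sigma(b)^d$ for every $d>0$, as in the remark following Proposition \ref{ideal}. Then $\overline Y = \mathrm{Bl}_{I^\sigma(b)}(Y)$ literally, Hartshorne's corollary applies, and one obtains
$$\overline X = \mathrm{Bl}_{J^\sigma(b)}(X) = \Proj_X\Bigl(\cO_X \oplus \bigoplus_{d>0} J^\sigma(b)^d\Bigr),$$
which is the second formula of the proposition.

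To recover the first formula for an arbitrary multiple $b$ of $M$, I would pass to a large multiple $Nb$ (with $N$ chosen so the previous step applies), and then descend via the truncation principle for $\Proj$ (\cite[Exercise II.5.13]{Hartshorne}): the Veronese subalgebra of $\bigoplus_d J^\sigma(db)$ in degrees divisible by $N$ is precisely $\bigoplus_d J^\sigma(dNb)$, and Proj of a graded algebra and its Veronese coincide canonically over the base. The only bookkeeping is checking that the operation $i^{-1}(\cdot)\cdot\cO_X$ commutes with the formation of the graded piece $I^\sigma(db)$, which is immediate from the definition of $J^\sigma$ and from the fact that $I^\sigma(db)$ is already $\Z_m$-stable so restricting to the invariant subring causes no complication. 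I do not expect any substantive obstacle beyond this; the content of the proposition is really just the compatibility of weighted blow-up with taking a $\Z_m$-invariant hypersurface, expressed through Hartshorne's functoriality of blow-ups.
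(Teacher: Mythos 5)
Your proposal is correct and follows essentially the same route as the paper, which deduces the statement precisely from Proposition \ref{ideal} together with Corollary 7.15, Chapter II of \cite{Hartshorne} (strict transform as blow-up of the restricted ideal). You merely spell out the details the paper leaves implicit (choosing $b$ large so that $I^{\sigma}(db)=I^{\sigma}(b)^d$, then descending to arbitrary multiples of $M$ by the truncation/Veronese principle), which is consistent with the paper's remark following Proposition \ref{ideal}.
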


\section{Lifting cyclic quotient singularities}

In this section we consider affine varieties $Z$ and $W$; we think at them as germs of complex spaces around a point $P$, $(Z,P)$ and $(W,P)$.
We assume that $P \in Z$ is an isolated $\Q$-factorial singularities; $\Q$-factoriality in this case depends on the analytic type of the singularity.

\begin{proposition}
\label{cyclic}
Let $Z$ be an affine variety of dimension $n\geq 4$ and assume that $Z$ has an isolated $\Q$-factorial singularity at $P \in Z$. 

Assume that $(W,P) \subset (Z,P) $ is a Weil divisor which is a cyclic quotient singularity, i.e.  $W = \C^{n-1} / \Z_m(a_1, ..., a_{n-1})$. \\
Then $Z$ is a cyclic quotient singularity, i.e.  $Z =\C^n / \Z_m(a_1, ..., a_{n-1}, a_n)$, where $a_n\in \Z$ is defined in the proof.
\end{proposition}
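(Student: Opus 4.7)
Following Corti's suggestion, the argument should rest on Schlessinger's rigidity theorem: an isolated quotient singularity of dimension $\ge 3$ has $T^1=0$, hence is infinitesimally — and, after Artin approximation, analytically — rigid.

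The first step is to pass to the analytic germ at $P$ and exploit $\Q$-factoriality of $Z$: the Weil divisor $W$ is $\Q$-Cartier with some Cartier index $d\mid m$, and a local equation $t\in\cO_{Z,P}$ of $dW$ produces a normalized cyclic cover $\pi\colon\tilde Z\to Z$ of degree $d$ (with algebra relation $s^d=t$). The reduced preimage $\tilde W := \pi^{-1}(W)_{\mathrm{red}}$ is a Cartier divisor cut out by $s$, and because $\pi$ is \'etale over $W^{\mathrm{sm}}$ the induced cover $\tilde W\to W$ is the connected cyclic \'etale cover of degree $d$ over $W^{\mathrm{sm}}$. Using $\pi_1(W^{\mathrm{sm}})=\Z_m$, this cover corresponds to the index-$d$ subgroup $\Z_{m/d}\subset\Z_m$, identifying $\tilde W$ with $\C^{n-1}/\Z_{m/d}(b_1,\ldots,b_{n-1})$ for suitable weights (smooth when $d=m$).

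The key step is to view $s\colon\tilde Z\to(\C,0)$ as a $1$-parameter deformation of $\tilde W$, an isolated quotient singularity of dimension $n-1\ge 3$. Schlessinger's theorem gives $T^1_{\tilde W}=0$; hence the deformation is trivial and $\tilde Z\cong\tilde W\times\C$ as analytic germs. Descending via the Galois $\Z_d$-action — which preserves $\tilde W$ and scales $s$ by a primitive $d$-th root of unity — and lifting further to the universal cover $\C^{n-1}$ of $\tilde W$, one obtains a linear action by a group $\Gamma$ of order $m$ on $\C^n=\C^{n-1}\times\C$ that fits in an extension
\[
1\longrightarrow\Z_{m/d}\longrightarrow\Gamma\longrightarrow\Z_d\longrightarrow 1,
\]
with $Z=\C^n/\Gamma$.

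The main obstacle I foresee is verifying that $\Gamma\cong\Z_m$: \emph{a priori} the extension could split as $\Z_{m/d}\times\Z_d$, which is cyclic of order $m$ only when $\gcd(d,m/d)=1$. Cyclicity must come from the specific construction of the cover: the class $[W]\in\Cl(Z)$ defining $\pi$ restricts to a generator of $\Cl(W)=\Z_m$, and chasing this compatibility through the cover shows that a lift to $\C^{n-1}$ of any generator of $\Z_d$ has order exactly $m$, forcing $\Gamma\cong\Z_m$. The remaining weight $a_n$ is then read off from the $\Z_d$-action on the coordinate $s$.
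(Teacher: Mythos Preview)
Your plan shares the two essential ingredients with the paper's proof—Schlessinger rigidity and the index-$d$ cyclic cover—but you miss the observation that makes the argument clean and that dissolves your ``main obstacle'' entirely. On the cover $\tilde Z$ (which, by the standard construction in Reid's Proposition~3.6, has a \emph{single} preimage $Q$ over $P$ and hence again an isolated singularity), the nearby fibres of $s\colon\tilde Z\to\C$ miss $Q$ and are therefore smooth. Rigidity then forces the central fibre $\tilde W$ itself to be smooth: either argue directly that a rigid singular germ cannot specialise to a smooth one, or note that your own conclusion $\tilde Z\cong\tilde W\times\C$ would give $\operatorname{Sing}(\tilde Z)=\operatorname{Sing}(\tilde W)\times\C$, contradicting isolatedness unless $\tilde W$ is smooth. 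Thus $m/d=1$, i.e.\ $d=m$; the cover $\tilde Z\to Z$ already has Galois group $\Z_m$, $\tilde Z\cong\C^n$ with $x_n:=s$, and the action of $\Z_m$ on the eigenfunction $s$ reads off $a_n$. No group extension ever appears.

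By contrast, your route leaves $\tilde W$ possibly singular, produces the extension $1\to\Z_{m/d}\to\Gamma\to\Z_d\to 1$, and then offers only a sketch (``chasing this compatibility'') for why $\Gamma$ is cyclic. That step is not obviously routine, and there is a further unaddressed point: the isomorphism $\tilde Z\cong\tilde W\times\C$ supplied by rigidity is not \emph{a priori} $\Z_d$-equivariant, so you cannot immediately descend it. Both issues evaporate once you use the isolated-singularity hypothesis on $\tilde Z$ as above, which is exactly what the paper does.
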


\begin{proof} Assume first that $W$ is a Cartier divisor, i.e. $W$ is given as a zero locus of a regular function $f$, 
$W :(f = 0)\subset Z$. 
The map $f:Z \ra \C$ is flat, since dim$_{\C}\C=1$.  Quotient singularities of dimension bigger or equal then three are rigid, by a fundamental theorem of M. Schlessinger (\cite{Schl}). Since $Z$ has an isolated singularity and $dimW = n-1 \geq 3$, it implies that  $W$ is smooth, i.e. $m=1$. A variety containing a smooth Cartier divisor is smooth along it, therefore, eventually shrinking around $P$, $Z$ is also smooth.

\smallskip
In the general case, since $Z$ is  $\Q$-factorial, we can assume that there exists a minimal positive integer $r$ such that  $rW$ is Cartier ($r$ is the index of $W$).  Following Proposition 3.6 in \cite{Re87}, we can take a Galois cover $\pi:Z' \ra Z$, with group $\Z_r$, such that $Z'$ is normal, $\pi$ is etale over $Z \setminus P$, $\pi ^{-1}(P)=: Q$ is a single point and the $\Q$-divisor $\pi^*W := W'$ is Cartier, $W': (f'=0) \subset Z'$.

Our assumption on $W$ implies that $r | m$, i.e. $m = r ^. s$, and $W' = \C^{n-1} / \Z_s(a_1, ..., a_{n-1})$. By the first part of the proof we have that $s=1$, i.e.  $W'$ and $Z'$ are smooth. 

Taking possibly a smaller neighborhood of $Q$, we can assume that, if $W' = \C^{n-1}$ with coordinates $(x_1, ..., x_{n-1})$, then $Z' = \C^n$,  with coordinates $(x_1, ..., x_{n-1}, x_n)$, where $x_n:= f'$.

The action of $\Z_m$ on $\C^n$, which extends the one on $\C^{n-1}$, fixes $W'$, therefore $f'$ is an eigenfunction;  that is for a primitive $m$-root of unity $\epsilon$ there exists $a_n \in \N$ such that $\epsilon: f' \ra \epsilon^{a_n} f'$.

Therefore the Galois cover $\pi:Z' = \C^n \ra Z$ is exactly the cover of the cyclic quotient singularity $Z=\C^n / \Z_m(a_1, ..., a_{n-1}, a_n)$.

\end{proof}

\begin{remark} 
If $n=3$ the above Proposition is false, as the following example shows.
\end{remark}

\begin{example}
\label{3cyclic}
Let $Z' = \C^4 / \Z_r(a, -a, 1, 0)$; let $(x,y,z,t)$ be coordinates in $\C^4$ and assume $(a,r)=1$. Let $Z \subset Z'$ be the hypersurface given as the zero set of the function $f:= xy + z^{rm} + t^n$, with $m \geq 1$ and $n \geq 2$. This is a terminal singularity which is not a cyclic quotient (it is a terminal hyperquotient singularity); in the classification of terminal singularities it is described in Theorem (12.1) of \cite{Mo82} (see also section 6 of \cite{Re87}).

However the surface $W := Z \cap (t=0)$, which is the surface in $\C^3 / \Z_r(a, -a, 1)$ given as the zero set of $(xy + z^{rm})$, is a cyclic quotient singularity of the type $\C^2 / \Z_{r^2 m}(a,rm-a)$. 

We give a proof of this last fact for the interested reader.
Let $\overline W$ be the surface in $\C^3$, with coordinate $(x,y,z)$, given as the zero set of the function $xy + z^{rm}$. $\overline W$ has a singularity of type $A_{rm-1}$, which is a cyclic quotient singularity of type $\overline W = \C^2 / \Z_{rm}(1, -1)$.

Let $(\xi, \eta)$ be the coordinate of $\C^2$ and let $\epsilon = e ^{2\pi i \over r^2m}$ a $r^2m$ root of unit; note that $\epsilon ^r$ is a $rm$ root of unit. 
The action of $ \Z_{rm}$ on $\C^2$ can be described  as $\epsilon^r  (\xi, \eta) = (\epsilon^{r}\xi, \epsilon^{-r} \eta)$.
A base for 
$\C[\xi, \eta]^{\Z_{rm}}$, the spectrum of the ring of invariant monomials under the group action, is given by 
$(\xi^{rm}, \eta^{rm},\xi^. \eta)$ and therefore
$\overline W = Spec(\xi^{rm}, \eta^{rm},\xi^. \eta)$. Let $(x,y,z) = (\xi^{rm}, \eta^{rm},\xi^. \eta)$, then $W$ is obtained as the quotient of $\overline W$ by the action of $\Z_r$ with weights $(a, -a, 1)$ given by  
$\epsilon^{rm} (x, y, z) = (\epsilon^{rma}x , \epsilon^{-rma}y, \epsilon^{rm}z)$. 
It is easy to check that this action can be lifted directly to $\C^2$ as the action: $\epsilon  (\xi, \eta) = (\epsilon^{a}\xi, \epsilon^{rm - a} \eta)$. This extends the previously defined $\Z_{rm}$-action on $\C^2$ and has $W$ as quotient.
\end{example}

\begin{proposition}
\label{hyperquotient}
Let $Z$ be an affine variety of dimension $n\geq 4$ with an isolated $\Q$-factorial singularity at $P \in Z$. Assume also that $(W,P) \subset (Z,P) $  is a Weil divisor which has a hyperquotient singularity at $P$. \\
Then $(Z,P)$ is a hyperquotient singularity.
\end{proposition}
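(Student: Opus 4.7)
The plan is to mimic the two-step structure of Proposition \ref{cyclic}---first reduce to the case in which $W$ is a Cartier divisor via an index cover, then argue directly in the Cartier case---with Schlessinger's rigidity theorem replaced by a deformation-theoretic argument specific to hyperquotients.

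For the first step, let $r$ be the index of $W$ in $Z$ and take the Galois cover $\pi: Z' \to Z$ with group $\Z_r$ as in \cite{Re87}, Proposition 3.6, so that $W' := \pi^*W$ is Cartier on $Z'$, $Z'$ is normal, $\pi$ is etale over $Z\setminus P$ and $\pi^{-1}(P) = Q$ is a single point. Writing $W = Y_W/\Z_m$ with $Y_W:(g_W=0)\subset \C^n$, I claim that $r\mid m$ and that $W'\cong Y_W/\Z_{m/r}$ is again a hyperquotient. Indeed, since $\dim Y_W = n-1 \geq 3$, the link of the isolated hypersurface singularity $0\in Y_W$ is simply connected by Milnor's theorem, whence $\pi_1(W\setminus P)=\Z_m$; the connected etale cover $W'\setminus Q\to W\setminus P$ of degree $r$ therefore corresponds to an index-$r$ subgroup of $\Z_m$.

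In the Cartier case $W=(f=0)\subset Z$, the function $f:Z\to\C$ is flat (its image has dimension $1$) and exhibits $Z$ as a one-parameter analytic deformation of $W$. Because the $\Z_m$-Galois cover $Y_W\to W$ is etale outside the isolated point $P$, every deformation of $W$ lifts canonically to a $\Z_m$-equivariant deformation of $Y_W$: at the level of tangent spaces one has $T^1(W)=T^1(Y_W)^{\Z_m}$, and the $\Z_m$-invariant deformations of the hypersurface $Y_W$ are precisely the $\Z_m$-equivariant perturbations of the defining equation $g_W$. Applying this to $Z\to\C$ produces a hypersurface $Y_Z:(g_Z(x_1,\ldots,x_n,t)=0)\subset \C^{n+1}$ with $g_Z|_{t=0}=g_W$ and an identification $Z\cong Y_Z/\Z_m$ of analytic germs, where $\Z_m$ acts diagonally on $\C^{n+1}$ (extending the action on $\C^n$ by giving $t$ the weight of $f$, so that $g_Z$ is an eigenfunction). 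This shows $Z$ is a hyperquotient singularity.

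For the general case, apply the Cartier step to $Z'\supset W'$ to get $Z'=Y_{Z'}/\Z_{m/r}$ with $Y_{Z'}\subset \C^{n+1}$ a hypersurface. The defining class of the cover $Y_{Z'}\to Z'$ in $\Cl(Z')$ is $\Z_r$-invariant (being read off from the $\Z_r$-invariant divisor $W'$), so the $\Z_r$-action on $Z'$ lifts to $Y_{Z'}$ and combines with the deck transformations $\Z_{m/r}$ into a cyclic $\Z_m$-action on $Y_{Z'}$ whose quotient is $Z$; this exhibits $(Z,P)$ as a hyperquotient singularity. The hardest point will be the identification $T^1(W)=T^1(Y_W)^{\Z_m}$ and the actual integration of the infinitesimal lift to a genuine $\Z_m$-equivariant analytic deformation of $Y_W$; this replaces Schlessinger's rigidity used in the cyclic case and relies crucially on the hypothesis that $Y_W\to W$ is etale over $W\setminus P$, i.e. that the hyperquotient is isolated.
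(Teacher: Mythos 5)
Your overall architecture (Reid's $\Z_r$-Galois index cover to reduce to the case of a Cartier divisor, identification of the induced cover of $W$ as a hyperquotient via the local fundamental group, then descent) is the same as the paper's, but the heart of the matter --- the Cartier case --- is exactly the step you leave unproven. You yourself flag the identification $T^1(W)=T^1(Y_W)^{\Z_m}$ and, more seriously, the integration of the infinitesimal lift into an actual $\Z_m$-equivariant analytic deformation of $Y_W$ inducing the given one-parameter family $f:Z\ra \C$, as ``the hardest point'' and you do not supply either; but this is not a routine verification to be deferred, it is the entire content of the proposition in the Cartier case, and carrying it out would amount to an equivariant Schlessinger-type local-cohomology comparison plus an effectivity argument. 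The paper avoids this by a shorter device which you never use: since $W$ is a hypersurface inside the cyclic quotient $V=\C^{n}/\Z_m(a_1,\ldots,a_n)$, its deformations are embedded ones, so $f:Z\ra\C$ extends to a deformation $\tilde f:\tilde Z\ra\C$ of the ambient space $V$; Schlessinger's rigidity applied to $V$ (of dimension $n\ge 4$) then forces $\tilde Z=V\times\C=\C^{n+1}/\Z_m(a_1,\ldots,a_n,0)$, exhibiting $Z$ at once as a hypersurface $(\tilde g=0)$ in a cyclic quotient. Without Schlessinger (or a fully worked-out substitute for the lifting-and-integration claim), your Cartier case is open.

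Two further places where you assume more than the hypotheses grant. Your computation $\pi_1(W\setminus P)=\Z_m$ rests on the singularity of the hypersurface cover $Y_W$ at the origin being isolated (needed for Milnor's simple connectivity of the link) and on the $\Z_m$-action being free on $Y_W\setminus\{0\}$; neither is part of the definition of a hyperquotient singularity nor of the hypotheses of the proposition, which only make $P\in Z$ an isolated singularity. The paper instead compares $\pi_1(W\setminus\{0\})$ with $\pi_1\big((\C^{n}/\Z_m)\setminus\{0\}\big)$ by the Lefschetz-type theorem for local fundamental groups, using only the dimension hypothesis, and so does not need $Y_W$ to have an isolated singularity. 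Finally, in your descent step the assertion that the lifted $\Z_r$-action and the deck group $\Z_{m/r}$ generate a cyclic group $\Z_m$ (rather than some metacyclic extension of $\Z_r$ by $\Z_{m/r}$) is stated without argument; this needs to be tied back to the cyclicity of the relevant local fundamental group, as the paper's bookkeeping with subgroups of $\Z_m=\pi_1(W\setminus\{0\})$ does.
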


\begin{proof} Let $W: (g=0) \subset  \C^{n} / \Z_m(a_1, ..., a_n)$.

As in the previous proof we assume first that $W$ is a Cartier divisor, i.e. $W$ is given as the zero locus of 
a regular function $f$. The map $f: Z \ra \C$ is flat and it gives a deformation of $W$. Since $W$ is a hypersurface singularity, its infinitesimal deformations are all embedded deformations, i.e. they extend to a deformation of the ambient space. That is, there exists a flat map $\tilde f: \tilde Z \ra \C$, such that $\tilde f ^{-1} (0) = \C^{n} / \Z_m(a_1, ..., a_n) $, $Z$ is a hypersurface in $\tilde Z$, i.e.  $Z : (\tilde g = 0) \subset \tilde Z$, and $\tilde f_{|Z} = f$. 

By Schlessinger's theorem (\cite{Schl}) this deformation $\tilde f$ is rigid, therefore $\tilde Z= \C^{n} / \Z_m(a_1, ..., a_n)\times \C = \C^{n+1} / \Z_m(a_1, ..., a_n, 0) $.  

Thus  $Z : (\tilde g = 0) \subset \C^{n+1}/\Z_m(a_1, ..., a_n, 0)$.

\smallskip
In the general case, as in \cite{Re87}, Proposition 3.6, we take the $\Z_r$-Galois cover $\pi:Z' \ra Z$,  such that $Z'$ is normal, $\pi$ is etale over $Z \setminus P$, $\pi ^{-1}(P)=: Q$ is a single point and the $\Q$-divisor $\pi^*W := W'$ is a Cartier divisor: $W': (f'=0) \subset Z'$.

The map $W' \ra W$ is an etale cover of $W$ ramified at $P$ and it depends on (a subgroup of) the local fundamental group $\pi_1(W\setminus \{0\})$. By our assumption on the dimensions and Lefschetz theorem this is equal to $\pi_1(\C^{n} / \Z_m(a_1, ..., a_n)\setminus \{0\}) = \Z_m$. 
Therefore the etale cover extends to $\C^{n} / \Z_m(a_1, ..., a_n)$ and we have that $W' :(g'=0) \subset  \C^{n} / \Z_s(a_1, ..., a_n)$, with  $m = r ^. s$. By the first part of the proof  
$Z' : (\tilde g' = 0) \subset \C^{n+1}/\Z_s(a_1, ..., a_n, 0) $.  Therefore  $Z : (\tilde g:= \tilde g' \circ \pi ^{-1} = 0) \subset \C^{n+1}/\Z_m(a_1, ..., a_n, a_{n+1}) $.
\end{proof}

\section{Lifting Weighted Blow-Ups}

This section is dedicated to the proof of Theorem \ref{lifting}; therefore  $f: X \ra Z$ will be a local, projective, divisorial contraction which contracts an irreducible divisor $E$ to $P\in Z$. We assume that $X$  (as a projective variety over $Z$) and $Z$ (as affine variety) are $\Q$-factorial; factoriality on $Z$  depends only on the analytic type of the singularities, on $X$ also on their relative position.

\smallskip
By assumption $Y \subset X$ is a $f$- ample Cartier divisor such that $f' = f_{|Y}: Y \ra f(Y) = W$  is a $\sigma'=  (a_1, \ldots,a_{n-1})$-blow-up, $\pi_{\sigma'}:  Y \ra W$.

In particular $W = (g=0) \subset \C^{n-1} / \Z_m(a_1, ..., a_{n-1})$, possibly with $g \equiv 0$. Proposition \ref{hyperquotient} implies that $Z =  (\tilde g=0) \subset \C^n / \Z_m(a_1, ..., a_{n-1}, a_n)$. Note that $W = f(Y)$ is given as $(x_n = 0) \subset Z$.

\smallskip
We have also  $Pic (Y/W) =\   <L_{|E}>$, where $L= -ME$,  $M=\mathrm{lcm}(a_1,\ldots,a_{n-1})$. 
By the relative Lefschetz theorem, $Pic(X/Z) = Pic (Y/W)=\  <L>$; note that we simply use the injectivity of the restriction map $Pic(X/Z) \lra Pic (Y/W)$, 
true even in the singular case (see for instance p.305 \cite{Kle} or \cite{SGA II}). 

\smallskip
Since $Y$ is Cartier and ample, there exists a positive integer $a$ such that $\cO_X(Y) \sim_f  aL$. We claim that $a_n= aM$. To show this consider the $\sigma := (a_1, ...., a_n)$-blow up of $Z$, 
$\tilde f: \tilde X \ra Z$. Let $\tilde E$ be the exceptional divisor. Note that $Y$ sits in $\tilde X$ as an ample divisor, therefore by Lefschetz theorem there exists 
a Cartier divisor $\tilde L$ on $\tilde X$ which extends $L_{|E'}$, $\tilde L = - M\tilde E$ and $Y = -aM\tilde E$.
Since $\tilde f(\tilde Y): (x_n= 0)$, by Lemma \ref{push-forward} we compute that $a_n= \sigma\textrm{-wt}(x_n) = aM$.

\smallskip
The map $f$ is proper, so, as in Section \ref{s_weighted}, we can apply Grothendieck's language, section 8 of \cite{EGA II}, to say that
$$
X= \Proj_Z (\cO_Z \oplus  \bigoplus_{d > 0}  I_d),
$$ 
where $I_d:=f_*\m O_X(-d (ME)) = f_*\m O_X(dL)$. 

Note that, since $E$ is effective, $I_d=f_*\m O_X(dL) \subset \cO_Z \subset \C^n [x_1, ..., x_n]$ is an ideal for positive $d$ and 
$I_d=f_*\m O_X(dL)=\cO_Z \subset \C^n [x_1, ..., x_n]$ for non positive d.

\smallskip
By Propositions \ref{ideal} and \ref{ideal2}, $X$ will be the weighted blow-up if for positive $d$

$$
f_*\m O_X (dL)= i^{-1} (x_1^{s_1}\cdots x_n^{s_n} : \sum_{j=1}^{n}  s_ja_j \ge  db) ^. \cO_Z
$$
where $b = M$, $s_i$ are non negative integers and $i: Z \ra \C^{n}/ \Z_m(a_1, ...., a_n)$ is the inclusion.

\medskip
We now mimic the proof of Theorem 3.6 in \cite{Mo75}.

Consider the exact sequence
\begin{align}\label{sequence1}
0 \to \m O_X(iL -aL) \to \m O_{X} (iL)\to \m O_{Y}(iL) \to 0, 
\end{align}
for every integer $i$.

We have noticed in Section \ref{s_weighted} that  $R^1{f'}  _*\cO_Y(iL) = 0 $ for $i \in \Z$. Therefore, by  \ref{sequence1},
we obtain surjections 
$R^1{f}  _*\cO_X((i-aj)L) \ra R^1{f}  _*\cO_X(iL) $ , $i, j \in \Z, j \geq 0$.
On the other hand $R^1{f}  _*\cO_X(-jL) = 0 $ for sufficently large $j$. Hence we obtain
$$R^1{f}  _*\cO_X(iL) = 0 \hbox{\ \ \ for every integer\ \ \ } i.$$

\smallskip
All this implies the following exact sequences of $\cO _Z$-algebras, $\cO_Z = \big(\C[x_1, ...,x_n] /(\tilde g)\big)^{\Z_m}$:

\begin{align}\label{sequence2}
0 \to  f_* \m O_X((i-a)L )\rightarrow f_*\m O_{X} (iL)\to f_*\m O_{Y}(iL) \to 0.
\end{align}

In particular, for $i=a$, we have 
$$0 \to \cO_Z  \rightarrow f_*\m O_{X} (aL)\to f_*\m O_{Y}(aL) \to 0.$$

 Let $\theta$ be the image of $1$ by the map $\cO_Z  \rightarrow f_*\m O_{X} (aL)$; then
 \ref{sequence2} becomes
 \begin{align}\label{sequence3} 
 0 \to  f_* \m O_X((i-a)L ) \stackrel{\times \theta}\rightarrow f_*\m O_{X} (iL)\to f_*\m O_{Y}(iL) \to 0;
 \end{align}

$\times \theta$ is exactly $\times (x_n)$.

\medskip
We will prove, by induction on $d$, that 
$$ f_*\m O_X (dL)=(x_1^{s_1}\cdots x_n^{s_n} : \sum_{j=1}^{n}  s_ja_j \ge  db) ^. \cO_Z.$$

\smallskip
By assumption we have that
$$
f_*\m O_{Y}(dL)=(x_1^{s_1}\cdots x_n^{s_n} : \sum_{j=1}^{n-1}  s_ja_j \ge  db)^. \cO_W
$$
where $s_j \in \bb N$.   

\smallskip
By induction on $d$, we can assume that
$$
f_*\m O_{X}((d-a) L)=(x_1^{s_1}\cdots x_n^{s_n} : \sum_{j=1}^{n}  s_ja_j \ge  (d-a)b) ^. \cO_Z,
$$
the case $d - a \leq 0$ being trivial.

\medskip
Let $g=x_1^{s_1}\cdots x_n^{s_n} \in f_*\m O_{X}(dL)$ be a monomial.

\smallskip
If $s_n \ge 1$ then, looking at the sequence \ref{sequence3}, $g$ comes from $f_*\m O_{X}((d-a)L)$ by the multiplication by $(x_{n})$; therefore
$$
\sum_{j=1}^{n}  s_ja_j  = \sum_{j=1}^{n-1}  s_ja_j  +s_n a_n \geq (d-a)b + s_n a_n \geq db -ab + ab = db.
$$

If $s_n=0$, then  $g \in f_*\m O_{Y}(dL)$ and so 
$$
\sum_{j=1}^n  s_ja_j = \sum_{j=1}^{n-1}  s_ja_j  \ge db.
$$
The non-monomial case follows immediately.

\section{Application to MMP with scaling}

The proof of Theorem \ref{MMP}, as explained in the introduction, follows via a standard induction procedure
using Theorem \ref{lifting}, Theorem 1.1 in \cite{AnTa15} and, for dimension $3$, assuming \ref{ass}.
It is actually very similar to the proof of Therem 1.2.A in \cite{AnTa15}, we rewrite it for the reader's convenience.

\begin{proof}[Proof of Theorem \ref{MMP}]

Let $f : X \ra Z$ be a local  projective, divisorial contraction which contracts a prime divisor $E$ to $P\in Z$
as in the Theorem.

$\tau_f(X,L):=\hbox{inf} \{t \in \R : K_X + tL \hbox{ is $f$-nef}\}$ is called the {\it nef-value} of the pair $(f:X \ra Z,L)$. By the rationality theorem of Kawamata (Theorem 3.5 in  \cite{KollarMori}), $\tau_f(X,L):= \tau$ is a rational non-negative number. Moreover $f$ is an adjoint contraction supported by  $K_X+ \tau L$, that is $K_X+ \tau L \lin_f \cO_X$ ($\lin _f$ stays for numerical equivalence over $f$).

By our assumption $\tau > (n-3)$. Therefore $\tau +3 > n > n-1 = dim E$ and, by Proposition 3.3.2 in \cite{AnTa15}, there exists a section of $L$ not vanishing along $E$; in particular $|L|$ is not empty.

\smallskip
Let $H_i \in |L|$ be general divisors for $i=1,\ldots, n-3$.
By Theorem 1.1 in \cite{AnTa15}, quoted in the introduction, for any $i$,  $H_i$ is a variety with terminal singularities and the morphism $f_i=f_{|H_i}: H_i \to f(H_i)=:Z_i$ is a local contraction supported by $K_{H_i}+ (\tau -1 )L_{|H_i}$.
Since $Z$ is terminal and $\Q$-factorial (see \cite[Corollary 3.36]{KollarMori} and \cite[Corollary 3.43]{KollarMori}), then  the $Z_i$'s are  $\Q$-Cartier divisors on $Z$. 
 
For any $t=n-3,\ldots,0$ define $Y_t=\cap_{i=1}^{n-3-t}  H_i$  and $g_t= f_{|Y_t}: Y_t \to f(Y_t)=W_t$; in particular  $Y_{n-3}=X$, $g_{n-3}=f$ and $W_{n-3}=Z$.

By induction on $t$, applying Theorem 1.1 in \cite{AnTa15},  one sees that, for any $t=n-4,\ldots, 0$, $Y_t$ is terminal and $g_t : Y_t \to W_t$ is a local Fano Mori contraction supported by 
$K_{Y_t}+ (\tau -(n - 3 - t )L_{|Y_t}$. Therefore  $W_t$ is a terminal variety (by \cite[Corollary 3.43]{KollarMori}) and it is a $\Q$-Cartier divisor in $W_{t+1}$, because intersection of $\Q$-Cartier divisors (by construction $W_t=\cap_{i=1}^{n-3-t} Z_i$). 
  
Set $L_t:= L_{|W_t}$.  By Proposition 3.3.4 of  \cite{AnTa15} $Bs|L_t|$ has dimension at most 1; by Bertini's theorem (see \cite[Thm. 6.3]{Jou83}) $E_t:= Y_t \cap E$ is a prime divisor.  $E_t$  is the intersection of $\Q$-Cartier divisors and hence it is $\Q$-Cartier.  

Let $X''= Y_0$ and $f''= g_{0}$; by what said above, $f'': X'' \to Z''$ is a divisorial contraction from a 3-fold $X''$ with terminal singularities, which contracts a prime $\Q$-Cartier divisor $E''$ to a point $P\in Z''$.  Using the classification in dimension $3$ of terminal $\Q$-factorial singularities (\cite{Mo82}) and of divisorial contractions (for a summary see \cite{Chen}), one can see that $Z''$ has a hyperquotient singularity at $P$, which is actually contained in a special list. 

By Proposition \ref{hyperquotient} and by induction on $t$, also $Z$ has a hyperquotient singularity at $P$.

\smallskip
Assume now (\ref{ass}), that is that $f''$ is a weighted blow-up of $P$; 
applying Theorem \ref{lifting} inductively on $t$, we have that $f$ is  a weighted blow-up  of a hyperquotient singularities.

\end{proof}

\end{document}